\pgfmathsetmacro\weight{1/2}
\pgfmathsetmacro\third{1/3}
\pgfmathsetmacro\twothirds{2/3}
\tikzset{degil/.style={
            decoration={markings,
            mark= at position 0.5 with {
                  \node[transform shape] (tempnode) {$/$};
                  }
              },
              postaction={decorate}
}
} 
\newtheorem{theorem}{Theorem}[section]
\newtheorem{proposition}[theorem]{Proposition}
\newtheorem{corollary}[theorem]{Corollary}
\newtheorem{definition}[theorem]{Definition}
\newtheorem{example}[theorem]{Example}
\newtheorem{remark}[theorem]{Remark}
\newcounter{syscounter}
\newenvironment{sysnum}{\begin{list}{($\Sigma{\arabic{syscounter}}$)}%
{\settowidth{\labelwidth}{($\Sigma4$)}
\settowidth{\leftmargin}{($\Sigma4$)~}%
\usecounter{syscounter}}}
{\end{list}}
\newcommand  \esssup {\mathop{\text{ess} \sup} }
\newcommand \N   {\mathbb{N}}
\newcommand \R   {\mathbb{R}}
\newcommand \C   {\mathbb{C}}
\newcommand \K   {\mathcal{K}}
\newcommand \Kinf{\mathcal{K_\infty}}
\newcommand \KL  {\mathcal{KL}}
\newcommand \LL  {\mathcal{L}}
\newcommand{\Uc}{\ensuremath{\mathcal{U}}}
\newcommand{\Vc}{\ensuremath{\mathcal{V}}}
\newcommand \Iff   {\Leftrightarrow}
\newif\ifAndo
\title{\LARGE \bf
Input-to-State Stability of Nonlinear Parabolic PDEs with Dirichlet Boundary Disturbances
}
\author{Andrii Mironchenko, Iasson Karafyllis and Miroslav Krstic\\
\thanks{A. Mironchenko is with Faculty of Computer Science and Mathematics, University of Passau,
Innstra\ss e 33, 94032 Passau, Germany. Corresponding author,
{\tt\small andrii.mironchenko@uni-passau.de}.
}
\thanks{Iasson Karafyllis Department of Mathematics, National Technical University of Athens, Greece,
{\tt\small iasonkar@central.ntua.gr}.
}
\thanks{Miroslav Krstic is with Department of Mechanical and Aerospace Engineering, University of California, San Diego, USA, 
{\tt\small krstic@ucsd.edu}.
}
\thanks{
A. Mironchenko has been supported by the DFG grant \href{http://www.fim.uni-passau.de/dynamische-systeme/forschung/input-to-state-stability-and-stabilization-of-distributed-parameter-systems/}{"Input-to-state stability and stabilization of distributed parameter systems"} (Wi1458/13-1).
}
\thanks{
The proofs and further applications of the main results can be found in the full version of the paper, see \cite{MKK17}.
}
}
\begin{document}

\maketitle
\thispagestyle{empty}
\pagestyle{empty}

\begin{abstract}
We introduce a monotonicity-based method for studying input-to-state stability (ISS) of nonlinear parabolic equations with boundary inputs.
We first show that a monotone control system is ISS if and only if it is ISS w.r.t. constant inputs. Then we show by means of classical maximum principles that nonlinear parabolic equations with boundary disturbances are monotone control systems. 

With these two facts, we establish that ISS of the original nonlinear parabolic PDE with constant \textit{boundary disturbances} 
is equivalent to ISS of a closely related nonlinear parabolic PDE with constant \textit{distributed disturbances} and zero boundary condition. The last problem is conceptually much simpler and can be handled by means of various recently developed techniques.
\end{abstract}

\textbf{Keywords}: parabolic systems, infinite-dimensional systems, input-to-state stability, monotone systems, boundary control, nonlinear systems

\section{Introduction}

The concept of input-to-state stability (ISS), which unified Lyapunov and input-output approaches, plays a foundational role in nonlinear control theory \cite{Son08}. It is central for robust stabilization of nonlinear systems \cite{FrK08,KKK95}, design of nonlinear observers \cite{ArK01}, analysis of large-scale networks \cite{JTP94,DRW07} etc.

The success of ISS methodology for ordinary differential equations and time-delay systems, see e.g. \cite{Tee98, PeJ06, PKJ08, Krs08d}, as well as importance of robust control for distributed parameter systems, motivated a rapid development of the ISS theory for abstract infinite-dimensional systems \cite{JLR08, KaJ11, DaM13b, MiI15b, MiW17b, Mir16} and more specifically of partial differential equations (PDEs) \cite{MaP11, PrM12, MiI15b, AVP16, CDP17, PiO17, TPT17} during the last decade. 
Important results achieved include characterizations of ISS and local ISS for broad classes of nonlinear infinite-dimensional systems \cite{MiW17b, Mir16}, nonlinear small-gain theorems for interconnections of $n\in\N$ infinite-dimensional systems \cite{KaJ11, MiI15b}, applications of ISS Lyapunov theory to analysis and control of various classes of PDE systems \cite{PrM12, AVP16, TPT17, CDP17, PiO17} etc.
However, most of these papers are devoted to PDEs with distributed inputs.
\textit{In this work we study input-to-state stability (ISS) of nonlinear parabolic partial differential equations (PDEs) with boundary disturbances on multidimensional spatial domains.} This question naturally arises in such fundamental problems of PDE control as robust boundary stabilization of PDE systems, design of robust boundary observers, stability analysis of cascades of parabolic and ordinary differential equations (ODEs) etc.

It is well known, that PDEs with boundary disturbances can be viewed as evolution equations in Banach spaces with unbounded input (disturbance) operators. This makes the analysis of such systems much more involved than stability analysis of PDEs with distributed disturbances (which are described by bounded input operators), even in the linear case. 

At the same time, ISS of linear parabolic systems w.r.t. boundary disturbances has been studied in several recent papers using different methodologies \cite{AWP12, KaK16b, KaK17a, JNP18, ZhZ17}.
In \cite{AWP12} the authors have shown that parabolic systems with boundary disturbances are ISS w.r.t. $C^1$ norm of disturbances (known as $D^1$-ISS, see \cite[p. 190]{Son08}) provided the system is uniformly asymptotically stable if disturbances are set to zero. 
In \cite{ZhZ17} linear and nonlinear parabolic systems with Neumann or Robin boundary inputs over one-dimensional spatial domain have been studied by means of Lyapunov methods combined with novel Poincare-like inequalities. As a result several criteria for ISS of such systems w.r.t. $C$-norm of boundary inputs have been obtained. 

In \cite{KaK16b, KaK17a, KaK17b} linear parabolic PDEs with Sturm-Liouville operators over 1-dimensional spatial domain have been treated by using two different methods: (i) the spectral decomposition of the solution, and (ii) the approximation of the solution by means of a  finite-difference scheme. 
This made possible to avoid differentiation of boundary disturbances, and to obtain ISS of classical solutions w.r.t. $L^\infty$ norm of disturbances, as well as in weighted $L^2$ and $L^1$ norms. 
An advantage of these methods is that this strategy can be applied also to other types of linear evolution PDEs. At the same time, for multidimensional spatial domains, the computations can become quite complicated.

In \cite{JNP18, JSZ17} ISS of linear boundary control systems has been approached by means of methods of semigroup and admissibility theory. In particular, using the theory of Orlicz spaces and $H_\infty$ calculus, deep relations between ISS and integral input-to-state stability (iISS) have been achieved. This method can be applied to a large class of linear control systems (in particular, the results on equivalence between ISS and integral ISS have been shown in \cite{JSZ17} not only for parabolic systems, but also for the systems governed by analytic semigroups). At the same time, the lack of proper generalizations of admissibility to general nonlinear systems makes this method hard (if possible) to apply to nonlinear distributed parameter systems.

\textit{In this paper, we develop a novel method for investigation of parabolic PDEs with Dirichlet boundary disturbances.} In contrast to previous results, we do not restrict ourselves to linear equations over 1-dimensional spatial domains. Our results are valid for a class of nonlinear equations over multidimensional bounded domains with a sufficiently smooth boundary. Our method is based on the concept of monotone control systems introduced in \cite{AnS03} and inspired by the theory of monotone dynamical systems pioneered by M. Hirsch in 1980-s in a series of papers, beginning with \cite{Hir82}. For the introduction to this theory, a reader may consult \cite{Smi95}. Monotone control systems appear frequently in applications, see e.g. in chemistry and biology \cite{LAS07, Smi95}.
An early effort to use monotonicity methods to study of ISS of infinite-dimensional systems has been made in \cite{DaM10} to show ISS of a particular class of parabolic systems with distributed inputs and Neumann boundary conditions.

Our strategy is as follows. 

First, we show in Section~\ref{sec:ISS_monotone_control_systems} that \textit{a monotone control system (and in particular a nonlinear parabolic PDE system with boundary disturbances) is ISS if and only if it is ISS over a much smaller class of inputs (e.g. constant inputs)}. This is achieved by proving that, for any given disturbance, there is a larger constant disturbance which leads to the larger deviation from the origin, and hence, in a certain sense, the constant disturbances are the "worst case" ones.

Next, in Section~\ref{ISS_nonlinear_parabolic_PDEs}, using maximum and comparison principles for nonlinear parabolic operators, we show that \textit{under certain regularity assumptions nonlinear parabolic equations with Dirichlet boundary disturbances are monotone control systems}. 
In turn, this nice property helps us to show in Section~\ref{ISS_nonlinear_parabolic_PDEs} 
that \textit{ISS of the original nonlinear parabolic PDE with constant \textit{boundary disturbances} is equivalent to ISS of a closely related nonlinear parabolic PDE with constant \textit{distributed disturbances}}. 
The latter problem has been studied extensively in the last years, and a number of powerful results are available for this class of systems. 
In particular, in \cite{MaP11}, constructions of strict Lyapunov functions for certain nonlinear parabolic systems have been provided. In \cite{MiI15b, DaM13}, ISS and integral ISS small-gain theorems for nonlinear parabolic systems interconnected via spatial domain have been proved which give powerful tools to study stability of large-scale parabolic systems on the basis of knowledge of stability of its components. 
Results in this paper make it possible to use this machinery for analysis of ISS of nonlinear parabolic PDEs with boundary inputs. 


Finally, in Section~\ref{sec:Lin_Parabolic_Systems}, we apply above results to linear parabolic problems, which are of specific interest. The obtained ISS criteria can be applied to the problem of ISS stabilization of linear parabolic systems by means of a PDE backstepping method (see for instance \cite{KrS08, SmK10}) in the presence of actuator disturbances, see \cite{MKK17} for more details. 

Next, we introduce some notation used throughout these notes.
By $\R_+$ we denote the set of nonnegative real numbers. For $z \in \R^n$ the Euclidean norm of $z$ is denoted by $|z|$.
For any open set $G\subset \R^n$ we denote by $\partial G$ the boundary of $G$; by $\overline{G}$ the closure of $G$ and by $\mu(G)$ the Lebesgue measure of $G$.
Also for such $G$ and any $p\in[1,+\infty)$ we denote 
by $L^p(G)$ the space of Lebesgue measurable functions $y$ with $\|y\|_p=\Big(\int_G |y(z)|^p dz\Big)^{1/p}$, 
by $L^\infty(G)$ the space of Lebesgue measurable functions $y$ with $\|y\|_\infty=\esssup_{z\in G}|y(z)|$, 
and by $H^k(G)$ the set of $g\in L^2(G)$, which possess weak derivatives up to the $k$-th order, all of which belong to $L^2(G)$.
$C^k(G)$ consists of $k$ times continuously differentiable functions defined on $G$ and $C_0^1(G)$ consists of a functions from $C^1(G)$ which have a compact support.
$H^{1}_0(G)$ is a closure of $C_0^1(G)$ in the norm of $H^1(G)$. 
If $I \subset \R_+$, then $C^{1,2}(I\times G)$ means the set of functions mapping $I\times G$ to $\R$, which are continuously differentiable w.r.t. the first argument and possess continuous second derivatives w.r.t. the second arguments.

Also we will use the following classes of comparison functions.
%
\begin{equation*}
\begin{array}{ll}
{\K} &:= \left\{\gamma:\R_+\rightarrow\R_+\left|\ \gamma\mbox{ is continuous, strictly} \right. \right. \\
&\phantom{aaaaaaaaaaaaaaaaaaa}\left. \mbox{ increasing and } \gamma(0)=0 \right\}, \\
{\K_{\infty}}&:=\left\{\gamma\in\K\left|\ \gamma\mbox{ is unbounded}\right.\right\},\\
{\LL}&:=\left\{\gamma:\R_+\rightarrow\R_+\left|\ \gamma\mbox{ is continuous and strictly}\right.\right.\\
&\phantom{aaaaaaaaaaaaaaaa} \text{decreasing with } \lim\limits_{t\rightarrow\infty}\gamma(t)=0\},\\
{\KL} &:= \left\{\beta:\R_+\times\R_+\rightarrow\R_+\left|\ \beta \mbox{ is continuous,}\right.\right.\\
&\phantom{aaaaaa}\left.\beta(\cdot,t)\in{\K},\ \beta(r,\cdot)\in {\LL},\ \forall t\geq 0,\ \forall r >0\right\}. \\
\end{array}
\end{equation*}

\textit{Due to the page limits we omit most of the proofs of the results obtained in the article. Please refer to the full version of the paper for the proofs and further applications of the main results \cite{MKK17}.}


\section{Monotonicity of control systems}
\label{sec:Framework_Monotonicity}


We start with a definition of a control system.
\begin{definition}
\label{Steurungssystem}
Consider a triple $\Sigma=(X,\Uc,\phi)$, consisting of 
\begin{enumerate}[(i)]  
 \item A normed linear space $(X,\|\cdot\|_X)$, called the {state space}, endowed with the norm $\|\cdot\|_X$.
 \item A set of  input values $U$, which is a nonempty subset of a certain normed linear space.
%
    %
    %
%
 \item A normed linear space of inputs $\Uc \subset \{f:\R_+ \to U\}$ endowed with the norm $\|\cdot\|_{\Uc}$.
   We assume that $\Uc$ satisfies \textit{the axiom of shift invariance}, which states that for all $u \in \Uc$ and all $\tau\geq0$ the time
shift $u(\cdot + \tau)$ is in $\Uc$.

\item A family of nonempty sets $\{\Uc(x) \subset \Uc: x \in X\}$, where $\Uc(x)$ is a set of admissible inputs for the state $x$.

\item A transition map $\phi:\R_+ \times X \times \Uc \to X$, defined for any $x\in X$ and any $u\in \Uc(x)$ on a certain subset of $\R_+$.
\end{enumerate}
The triple $\Sigma$ is called a (forward-complete) control system, if the following properties hold:
\begin{sysnum}
 \item \textit{Forward-completeness}: for every $x\in X$, $u\in\Uc(x)$ and for all $t \geq 0$ the value 
$\phi(t,x,u) \in X$ is well-defined.
 \item\label{axiom:Identity} \textit{The identity property}: for every $(x,u) \in X \times \Uc(x)$
          it holds that $\phi(0,x,u)=x$.
 \item \label{axiom:Causality}\textit{Causality}: for every $(t,x,u) \in \R_+ \times X \times
          \Uc(x)$, for every $\tilde{u} \in \Uc(x)$, such that $u(s) =
          \tilde{u}(s)$, $s \in [0,t]$ it holds that $\phi(t,x,u) = \phi(t,x,\tilde{u})$.
  \item \label{axiom:Cocycle} \textit{The cocycle property}: for all $t,h \geq 0$, for all
                  $x \in X$, $u \in \Uc(x)$ we have $u(t+\cdot) \in \Uc(\phi(t,x,u))$ and
\[
\phi(h,\phi(t,x,u),u(t+\cdot))=\phi(t+h,x,u).
\]
\end{sysnum}
\end{definition}
In the above definition $\phi(t,x,u)$ denotes the state of a system at the moment $t \in
\R_+$ corresponding to the initial condition $x \in X$ and the input $u \in \Uc(x)$.
A pair $(x,u)\in X\times\Uc(x)$ is referred to as an \textit{admissible pair}.

\begin{remark}\em
\label{rem:Compatibility_Conditions}
For wide classes of systems, in particular for ordinary differential equations, one can assume that $\Uc(x)=\Uc$ for all $x\in X$, that is every input is admissible for any state. On the other hand, the classical solutions of PDEs with Dirichlet boundary inputs have to satisfy compatibility conditions (see Section~\ref{ISS_nonlinear_parabolic_PDEs}), and hence for such systems $\Uc(x)\neq \Uc$ for any $x\in X$.
Another class of systems for which one cannot expect that $\Uc(x)=\Uc$ for all $x\in X$ are differential-algebraic equations 
(DAEs), see e.g. \cite{KuM06, KrT15}.
\end{remark}

\begin{definition}
A subset $K \subset X$ of a normed linear space $X$ is called a \textit{positive cone} if $K \cap (-K)=\{0\}$ 
and for all $a \in \R_+$ and all $x,y \in K$ it follows that $ax \in K$; $x+y \in K$.
\end{definition}

\begin{definition}
A normed linear space $X$ together with a cone $K \subset X$ is called an \textit{ordered normed linear space} (see \cite{Kra64}), which we denote $(X,K)$ with an order $\leq$ given by $x \leq y\ \Iff \  y-x \in K$. Analogously $x \geq y\ \Iff \  x-y \in K$.
\end{definition}

\begin{definition}
\index{control system!ordered}
We call a control system $\Sigma=(X,\Uc,\phi)$ ordered, if $X$ and $\Uc$ are ordered normed linear spaces.
\end{definition}

An important for applications subclass of control systems are monotone control systems:
\begin{definition} 
An ordered control system $\Sigma=(X,\Uc,\phi)$ is called monotone, provided for all $t \geq 0$, all $x_1, x_2 \in X$ with $x_1 \leq x_2$ and all $u_1 \in\Uc(x_1), u_2 \in \Uc(x_2)$ with $u_1 \leq u_2$ it holds that 
$\phi(t,x_1,u_1)  \leq \phi(t,x_2,u_2)$.
\end{definition}

To treat situations when the monotonicity w.r.t. initial states is not available, the following definition is useful:
\begin{definition} 
An ordered control system $\Sigma=(X,\Uc,\phi)$ is called monotone w.r.t. inputs, provided for all $t \geq 0$, all $x \in X$ and all $u_1,u_2 \in \Uc(x)$ with $u_1 \leq u_2$ it holds that $ \phi(t,x,u_1)  \leq \phi(t,x,u_2)$.
\end{definition}


\section{Input-to-state stability of monotone control systems}
\label{sec:ISS_monotone_control_systems}

Next we introduce the notion of input-to-state stability, which will be central in this paper.
\begin{definition} 
\label{def:ISS_Uc}
Let $\Sigma=\left(X,\mathcal{U},\phi \right)$ be a control system. Let $\Uc_{c}$ be a subset of $\mathcal{U}$. 
System $\Sigma$ is called \textit{input-to-state stable (ISS) with respect to inputs in $\Uc_{c}$} if there exist functions $\beta \in \KL$, $\gamma \in \K$ such that for every $x \in X$ for which $\Uc(x ) \bigcap \Uc_{c}$ is non-empty, the following estimate holds for all $u \in \Uc(x ) \bigcap \Uc_{c}$ and $t\ge 0$:
\begin{eqnarray}
\| \phi(t,x,u) \|_X \leq \beta(\| x \|_X,t) + \gamma( \|u\|_{\Uc}).
\label{eq:ISS_estimate}
\end{eqnarray}
If $\Sigma$ is ISS w.r.t. inputs from $\Uc$, then $\Sigma$ is called \textit{input-to-state stable (ISS)}.
\end{definition}

For applications the following notion, which is stronger than ISS is of importance:
\begin{definition} 
\label{def:exp-ISS_Uc}
Let $\Sigma=\left(X,\mathcal{U},\phi \right)$ be a control system. Let $\Uc_{c}$ be a subset of $\mathcal{U}$. 
System $\Sigma$ is called \textit{exponentially input-to-state stable (exp-ISS) with respect to inputs in $\Uc_{c}$} if there exist constants $M,a$ and $\gamma\in\Kinf$ such that for every $x \in X$ for which $\Uc(x ) \bigcap \Uc_{c}$ is non-empty, the following estimate holds for all $u \in \Uc(x ) \bigcap \Uc_{c}$ and $t\ge 0$:
\begin{eqnarray}
\| \phi(t,x,u) \|_X \leq Me^{-at}\| x \|_X + \gamma(\|u\|_{\Uc}).
\label{eq:exp-ISS_estimate}
\end{eqnarray}
If $\Sigma$ is exp-ISS w.r.t. inputs from $\Uc$, then $\Sigma$ is called \textit{exponentially input-to-state stable}.
If in addition $\gamma$ can be chosen to be linear, then $\Sigma$ is exp-ISS with a linear gain function.
\end{definition}


We are also interested in the stability properties of control systems in absence of inputs.
\begin{definition}
\label{def:0UGAS_Uc}
A control system $\Sigma=\left(X,\mathcal{U},\phi \right)$ is {\it globally asymptotically
stable at zero uniformly with respect to the state} (0-UGAS), if there
exists a $ \beta \in \KL$, such that for all $x \in X$: $0 \in \Uc(x)$ and for all $
t\geq 0$ it holds that
\begin{equation}
\label{UniStabAbschaetzung}
\left\| \phi(t,x,0) \right\|_{X} \leq  \beta(\left\| x \right\|_{X},t) .
\end{equation}
\end{definition}

It may be hard to verify the ISS estimate \eqref{eq:ISS_estimate} for all admissible pairs of states and inputs.
Therefore a natural question appears: to find a smaller set of admissible states and inputs, so that validity of the ISS estimate \eqref{eq:ISS_estimate} implies ISS of the system for all admissible pairs (possibly with larger $\beta$ and $\gamma$). 
For example, for wide classes of systems it is enough to check ISS estimates on the properly chosen dense subsets of the space of admissible pairs (a so-called "density argument", see e.g. \cite[Lemma 2.2.3]{Mir12}). As Propositions~\ref{prop:ISS_Monotone_Systems}, \ref{prop:ISS_Monotone_states_inputs_Systems} show, for monotone control systems such sets can be much more sparse.

We start with the case, when all $(x,u) \in X \times \Uc$ are admissible pairs.
\begin{proposition}
\label{prop:ISS_Monotone_Systems}
Let $(X,\Uc,\phi)$ be a control system that is monotone with respect to inputs  with $\Uc(x)=\Uc$ for all $x\in X$. Furthermore, let $\Uc_{c}$ be a subset of $\mathcal{U}$ and let the following two conditions hold:
\begin{enumerate}
 \item[(i)] There exists $\rho \in\Kinf$ so that for any $x_-,x,x_+ \in X$ satisfying $x_-\leq x \leq x_+$ it holds that
\begin{eqnarray*}
\|x\|_X \leq \rho(\|x_-\|_X + \|x_+\|_X).
\end{eqnarray*}
 \item[(ii)] There exists $\eta \in \Kinf$ so that for any $u \in \Uc$ there are $u_-,u_+ \in \Uc_{c}$, satisfying $u_-\leq u \leq u_+$ and $\|u_-\|_{\Uc} \leq \eta(\|u\|_{\Uc})$ and $\|u_+\|_{\Uc} \leq \eta(\|u\|_{\Uc})$. 
\end{enumerate}
Then $(X,\Uc,\phi)$ is ISS if and only if $(X,\Uc,\phi)$ is ISS w.r.t. inputs in $\Uc_{c}$.

Moreover, if $\rho$ is linear, then $\Sigma$ is exp-ISS if and only if $\Sigma$ is exp-ISS w.r.t. inputs in $\Uc_{c}$.
If additionally $\Sigma$ is exp-ISS w.r.t. inputs in $\Uc_{c}$ with a linear gain function and $\eta$ is linear, then $\Sigma$ is exp-ISS
with a linear gain function.
\end{proposition}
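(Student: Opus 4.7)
The forward implication is immediate since $\Uc_c \subset \Uc$, so the plan focuses on the converse. The strategy is to sandwich an arbitrary input between two inputs from $\Uc_c$, use monotonicity to sandwich the corresponding trajectories, and then use the order-theoretic norm bound (i) to reduce the estimate for the middle trajectory to estimates on the two outer ones.

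More concretely, fix $x \in X$ and an arbitrary $u \in \Uc = \Uc(x)$. By (ii) I would pick $u_-, u_+ \in \Uc_c$ with $u_- \leq u \leq u_+$ and $\max\{\|u_-\|_\Uc,\|u_+\|_\Uc\} \leq \eta(\|u\|_\Uc)$. Since $\Uc(x) = \Uc$, both $u_\pm$ are admissible for $x$, and monotonicity w.r.t.\ inputs applied at the common initial state $x$ gives
\[
\phi(t,x,u_-) \;\leq\; \phi(t,x,u) \;\leq\; \phi(t,x,u_+) \qquad \forall t \geq 0.
\]
Applying (i) to this triple and then invoking the ISS estimate w.r.t.\ $\Uc_c$ on both outer trajectories yields
\[
\|\phi(t,x,u)\|_X \leq \rho\bigl(\|\phi(t,x,u_-)\|_X + \|\phi(t,x,u_+)\|_X\bigr) \leq \rho\bigl(2\beta(\|x\|_X,t) + 2\gamma(\eta(\|u\|_\Uc))\bigr),
\]
where $\beta,\gamma$ come from the ISS-w.r.t.-$\Uc_c$ property. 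A standard weak-triangle trick for $\Kinf$-functions, e.g.\ $\rho(a+b) \leq \rho(2a) + \rho(2b)$, splits the right-hand side into a $\KL$-term in $(\|x\|_X,t)$ and a $\K$-term in $\|u\|_\Uc$, which is exactly the ISS estimate on all of $\Uc$.

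For the exponential refinement, with $\rho(s) = Cs$ the composition above simplifies to
\[
\|\phi(t,x,u)\|_X \leq 2CM e^{-at}\|x\|_X + 2C\gamma(\eta(\|u\|_\Uc)),
\]
so exp-ISS w.r.t.\ $\Uc_c$ upgrades to exp-ISS on $\Uc$ with the new gain $2C\gamma\circ\eta$; if $\gamma$ and $\eta$ are both linear, this composition is linear as well, giving the last claim.

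I do not foresee a serious obstacle: the hypothesis $\Uc(x) = \Uc$ is essential so that the $u_\pm$ produced by (ii) are automatically admissible for $x$ (otherwise the sandwich argument breaks down, which is presumably why a separate Proposition~\ref{prop:ISS_Monotone_states_inputs_Systems} is needed for the general case). The only mild care is in the bookkeeping of $\Kinf$ manipulations, in particular verifying that $\rho(2\beta(\cdot,\cdot))$ is still $\KL$ and that no nonlinearity in $\rho$ or $\eta$ spoils the linearity of the gain in the exponential refinement, which is why the final assertion restricts to linear $\rho$ and linear $\eta$.
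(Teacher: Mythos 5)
Your proposal is correct and follows exactly the strategy the paper intends (the conference version omits the proof, deferring to \cite{MKK17}, but the introduction describes precisely this argument: sandwich the input between two elements of $\Uc_c$, use monotonicity w.r.t.\ inputs to sandwich the trajectories, and use condition (i) to transfer the ISS bound from the outer trajectories to the middle one). Your bookkeeping with the weak triangle inequality $\rho(a+b)\le\rho(2a)+\rho(2b)$ and the linearity discussion for the exp-ISS refinement are both sound, so nothing further is needed.
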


Here, as for most results in this paper we omit the proofs due to the page limit constraints. \textit{We refer to \cite{MKK17} for the proofs.}
\begin{example}
\label{rem:Results_in_ODE_context}
Consider ODEs of the form
\begin{eqnarray}
\dot{x}=f(x,u),
\label{eq:ODEsys}
\end{eqnarray}
with $X=\R^n$ with an order induced by the cone $\R^n_+$, $\Uc:=L^\infty(\R_+,\R^m)$ with the order induced by the cone $L^\infty(\R_+,\R^m_+)$ and $\Uc(x)=\Uc$ for all $x\in\R^n$. 
Under assumptions that $f$ is Lipschitz continuous w.r.t. the first argument uniformly w.r.t. the second one and that \eqref{eq:ODEsys} is forward complete,
\eqref{eq:ODEsys} defines a control system $(X,\Uc,\phi)$, where $\phi(t,x_0,u)$ is a state of \eqref{eq:ODEsys} at the time $t$ corresponding to $x(0)=x_0$ and input $u$.

Assume that \eqref{eq:ODEsys} is monotone w.r.t. inputs with such $X$ and $\Uc$ and consider $\Uc_{c}:=\{u\in\Uc:\exists k\in\R^m:\ u(t)=k \text{ for a.e. } t\in\R_+\}$. It is easy to verify that assumptions of Proposition~\ref{prop:ISS_Monotone_Systems} are fulfilled, and hence \eqref{eq:ODEsys}
is ISS iff it is ISS w.r.t. the inputs with constant in time controls.

This may simplify analysis of ISS of monotone ODE systems since input-to-state stable ODEs with constant inputs have some specific properties,
see e.g. \cite[pp. 205--206]{Son08}.
\end{example}


As we argued in Remark~\ref{rem:Compatibility_Conditions}, for many systems $\Uc(x) \neq \Uc$ for some $x\in X$. For such systems Proposition~\ref{prop:ISS_Monotone_Systems} is inapplicable, due to the fact that existence of inputs $u_-,u_+ \in \Uc_{c}$ for a given initial condition $x$ and for an input $u$, satisfying assumption (ii) of Proposition~\ref{prop:ISS_Monotone_Systems} does not guarantee that the pairs $(x,u_-)$ and $(x,u_+)$ are admissible, which is needed for the proof of Proposition~\ref{prop:ISS_Monotone_Systems}.
However, if $(X,\Uc,\phi)$ is in addition monotone w.r.t. states, the following holds:
\begin{proposition}
\label{prop:ISS_Monotone_states_inputs_Systems}
Let $\Sigma=\left(X,\mathcal{U},\phi \right)$ be a control system that is monotone with respect to states and inputs. 
Assume that $\Uc_{c}\subset\mathcal{U}$ and let the following two conditions hold:
\begin{itemize}
   \item[(i)]There exists $\rho \in \Kinf $ so that for any $x_{-} ,x,x_{+} \in X$ satisfying $x_{-} \le x\le x_{+} $ it holds that
\begin{equation} \label{GrindEQ__19_} 
\left\| x\right\| _{X} \le \rho \left(\left\| x_{-} \right\| _{X} +\left\| x_{+} \right\| _{X} \right).
\end{equation} 
\item[(ii)] There exist $\eta ,\xi \in \Kinf $ so that for every $x\in X$, $u\in \Uc(x)$ and for every $\varepsilon >0$ there exist $x_{-} ,x_{+} \in X$ and $u_{-} \in \Uc(x_{-} ) \bigcap \Uc_{c}$, $u_{+} \in \Uc(x_{+}) \bigcap \Uc_{c}$, satisfying $x_{-} \le x\le x_{+} $, $u_{-} \le u\le u_{+} $, so that the estimates
\begin{equation} \label{GrindEQ__20_} 
\max \left(\left\| u_{-} \right\| _{\mathcal{U}} ,\left\| u_{+} \right\| _{\mathcal{U}} \right)\le \eta \left(\left\| u\right\| _{\mathcal{U}} +\varepsilon \right),
\end{equation} 
\begin{equation} \label{GrindEQ__21_} 
\max \left(\left\| x_{-} \right\| _{X} ,\left\| x_{+} \right\| _{X} \right)\le \xi \left(\left\| x\right\| _{X} +\left\| u\right\| _{\mathcal{U}} +\varepsilon \right).
\end{equation}
\end{itemize}
hold. Then $\Sigma$ is ISS $\Iff$ $\Sigma$ is ISS w.r.t. inputs in $\Uc_{c}$.

Moreover, if $\rho$ and $\xi$ are linear, then $\Sigma$ is exp-ISS if and only if $\Sigma$ is exp-ISS w.r.t. inputs in $\Uc_{c}$.
If additionally $\Sigma$ is exp-ISS w.r.t. inputs in $\Uc_{c}$ with a linear gain function and $\eta$ is linear, then $\Sigma$ is exp-ISS with a linear gain function.
\end{proposition}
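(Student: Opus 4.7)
The plan is to sandwich an arbitrary trajectory $\phi(t,x,u)$ between two trajectories driven by inputs in $\Uc_{c}$, apply the $\Uc_{c}$-ISS bound to each outer trajectory, and recover a bound on $\phi(t,x,u)$ itself via hypothesis (i). The direction $\Rightarrow$ is immediate because $\Uc_{c}\subset\Uc$; I will focus on $\Leftarrow$.

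First, fix $x\in X$, $u\in\Uc(x)$ and $\varepsilon>0$. Hypothesis (ii) supplies states $x_-\le x\le x_+$ and inputs $u_-\le u\le u_+$ with $u_-\in\Uc(x_-)\cap\Uc_{c}$ and $u_+\in\Uc(x_+)\cap\Uc_{c}$, so that the pairs $(x_-,u_-)$ and $(x_+,u_+)$ are admissible and the $\Uc_{c}$-ISS estimate applies to both. Monotonicity in states and inputs then yields
\[
\phi(t,x_-,u_-)\le \phi(t,x,u)\le \phi(t,x_+,u_+),\qquad t\ge 0,
\]
and combining condition (i) with the $\Uc_{c}$-ISS bounds on the outer trajectories and the size estimates \eqref{GrindEQ__20_}--\eqref{GrindEQ__21_} gives
\[
\|\phi(t,x,u)\|_X \le \rho\!\left(2\beta\bigl(\xi(\|x\|_X+\|u\|_{\Uc}+\varepsilon),t\bigr)+2\gamma\bigl(\eta(\|u\|_{\Uc}+\varepsilon)\bigr)\right).
\]
Letting $\varepsilon\downarrow 0$, valid by continuity of all comparison functions, removes the slack. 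The remainder is routine bookkeeping with the inequality $f(a+b)\le f(2a)+f(2b)$ for nondecreasing $f\ge 0$: applying it successively to $\rho$, to $\xi$ inside $\beta(\cdot,t)$, and to $\beta(\cdot,t)$ itself splits the right-hand side into a $\KL$-function of $(\|x\|_X,t)$ plus a $\K$-function of $\|u\|_{\Uc}$ (any residual time-dependent term applied only to $\|u\|_{\Uc}$ is dominated by its value at $t=0$, which lies in $\K$). For the exponential version, replacing $\beta(r,t)$ by $Me^{-at}r$, linearity of $\rho$ preserves the factor $e^{-at}$ outside, linearity of $\xi$ turns $\xi(\|x\|_X+\|u\|_{\Uc})$ into $d(\|x\|_X+\|u\|_{\Uc})$, and the $e^{-at}\|u\|_{\Uc}$ contribution is absorbed into the gain via $e^{-at}\le 1$; additional linearity of $\eta$ and of the $\Uc_{c}$-gain then produces a linear gain.

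The main obstacle I anticipate is not conceptual but administrative: hypothesis (ii) only delivers the sandwiching data up to an arbitrary slack $\varepsilon$, and one must track this slack carefully through the nested comparison functions before sending it to zero. The conceptual novelty compared to Proposition~\ref{prop:ISS_Monotone_Systems} is that when $\Uc(x)\neq\Uc$, sandwiching only the input is not enough: one must simultaneously replace the state by $x_\pm$ so that the pairs $(x_\pm,u_\pm)$ are genuinely admissible, which is exactly what hypothesis (ii) guarantees and what monotonicity in states lets us exploit.
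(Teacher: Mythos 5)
Your proof is correct and follows exactly the intended route (the conference version defers the proof to \cite{MKK17}, but the argument there is the same sandwich construction): use hypothesis (ii) to produce admissible pairs $(x_\pm,u_\pm)$ with inputs in $\Uc_c$, monotonicity in both states and inputs to trap $\phi(t,x,u)$ between $\phi(t,x_-,u_-)$ and $\phi(t,x_+,u_+)$, then hypothesis (i) together with the $\Uc_c$-ISS estimates, the bounds \eqref{GrindEQ__20_}--\eqref{GrindEQ__21_}, and the weak-triangle manipulations to extract a $\KL$ plus $\K$ bound after sending $\varepsilon\downarrow 0$. Your handling of the $\varepsilon$-slack, of the admissibility issue that distinguishes this from Proposition~\ref{prop:ISS_Monotone_Systems}, and of the linearity bookkeeping for the exp-ISS and linear-gain refinements is all sound.
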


\section{Input-to-state stability of nonlinear parabolic equations}
\label{ISS_nonlinear_parabolic_PDEs}

In this section we apply results from Section~\ref{sec:ISS_monotone_control_systems}
 to nonlinear parabolic equations with boundary inputs.
Our strategy is as follows: first we formulate in Proposition~\ref{prop:New_Comparison_Principle} the comparison principle for nonlinear parabolic operators. Afterwards we use this principle in Theorem~\ref{thm:Monotonicity_Parabolic_Systems} to show that under certain technical conditions the nonlinear parabolic systems with boundary disturbances give rise to monotone control systems. Finally, in Theorems~\ref{thm:ISS_Parabolic_Systems_Characterization}, \ref{thm:exp-ISS_Parabolic_Systems_Characterization} we show our main result that \textit{a nonlinear parabolic system with boundary disturbances is ISS if and only if a related system with distributed disturbances is ISS for constant inputs}. The precise statements of these results follow.


Let $G\subset \mathbb{R} ^{n} $ be an open bounded region, let $T>0$ be a constant and denote $D:=(0,T)\times G$. 
Let $CL(D)$ denote the class of functions $x\in C^{0} \left(\overline{D}\right)\bigcap C^{1,2} (D)$.
Denote for each $t\in[0,T]$ and each $x\in CL(D)$ a function $x[t]:\R_+ \to C^0(\overline{G})$ by $x[t]:=x(t,\cdot)$.

Consider the operator $L$ defined for 	 $x\in CL(D)$ by
\begin{eqnarray}
\label{GrindEQ__1_}
(Lx)(t,z)&:=&\frac{\partial \, x}{\partial \, t} (t,z)-\sum _{i,j=1}^{n}a_{i,j} (z)\frac{\partial ^{2} \, x}{\partial \, z_{i} \, \partial \, z_{j} } (t,z) \nonumber\\
&& \qquad -f\big(z,x(t,z),\nabla x(t,z)\big), 
\end{eqnarray}
where $(t,z)\in D$, $a_{i,j} \in C^{0} (\overline{G})$ for $i,j=1,...,n$ and $f:G\times \mathbb{R} \times \mathbb{R} ^{n} \to \mathbb{R} $ is a continuous function. The operator $L$ is called uniformly parabolic, if there exists a constant $K>0$ so that for all $\xi =(\xi _{1} ,...,\xi _{n} )\in \mathbb{R} ^{n} $ it holds that
\begin{equation} \label{GrindEQ__2_}
\sum _{i,j=1}^{n}a_{i,j} (z)\xi _{i} \xi _{j}  \ge K\left|\xi \right|^{2}  \quad \mbox{for all} \ z\in \overline{G}.
\end{equation}

We need the following proposition, based upon a classical comparison principle from \cite[Theorem 16, p. 52]{Fri83}.
\begin{proposition}
\label{prop:New_Comparison_Principle}
Let $L$ be uniformly parabolic. Assume that for every bounded set $W\subset \mathbb{R} $ there is a constant $k>0$ such that for every $w_1,w_2\in W$, $z\in G$, $\xi \in \mathbb{R} ^{n}$ with $w_1>w_2$ it holds that
\begin{equation} 
\label{GrindEQ__3_} 
f(z,w_1,\xi )-f(z,w_2,\xi ) < k(w_1-w_2).
\end{equation} 
Let $x,y\in CL(D)$ be so that
\begin{eqnarray}
(Ly)(t,z)&\ge& 0\ge (Lx)(t,z), \quad \mbox{for all} \ (t,z)\in D,\label{GrindEQ__4_}\\
y(0,z)&\ge& x(0,z), \quad \mbox{for all} \ z\in G, \label{GrindEQ__5_}\\
y(t,z)&\ge& x(t,z), \quad \mbox{for all} \ (t,z)\in [0,T]\times \partial G. \label{GrindEQ__6_}
\end{eqnarray}
Then $y(t,z)\ge x(t,z)$ for all $(t,z)\in \overline{D}$.
\end{proposition}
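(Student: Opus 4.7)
The plan is to prove the comparison by a first-crossing argument applied to a strictly ordered perturbation of $y$, which circumvents the absence of any gradient-Lipschitz hypothesis on $f$. Since $x,y\in C^{0}(\overline{D})$ and $\overline{D}$ is compact, their ranges lie in a bounded set $W\subset\R$; hypothesis \eqref{GrindEQ__3_} then furnishes a constant $k>0$ valid on $W$. Fix $\lambda>k$, and for each $\epsilon>0$ set $y_{\epsilon}(t,z):=y(t,z)+\epsilon e^{\lambda t}$. The aim is to prove $y_{\epsilon}>x$ on $\overline{D}$ for every $\epsilon>0$ and then send $\epsilon\to 0^{+}$.

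First I would verify that $y_{\epsilon}$ satisfies strict versions of \eqref{GrindEQ__4_}--\eqref{GrindEQ__6_}. A direct computation gives
\[
(Ly_{\epsilon})(t,z)-(Ly)(t,z)=\epsilon\lambda e^{\lambda t}-\bigl[f(z,y+\epsilon e^{\lambda t},\nabla y)-f(z,y,\nabla y)\bigr],
\]
and \eqref{GrindEQ__3_} bounds the bracketed term strictly above by $k\epsilon e^{\lambda t}$, so $Ly_{\epsilon}>Ly\geq 0$ on $D$. Conditions \eqref{GrindEQ__5_} and \eqref{GrindEQ__6_} likewise hold strictly for $y_\epsilon$ in place of $y$, since $y_{\epsilon}-y\geq\epsilon>0$.

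Next I would argue by contradiction via a first-crossing time. Suppose the set $E:=\{(t,z)\in\overline{D}:y_{\epsilon}(t,z)\leq x(t,z)\}$ is non-empty; it is then compact, so $t^{\ast}:=\min\{t:(t,z)\in E\text{ for some }z\}$ is attained, and the strict initial inequality forces $t^{\ast}>0$. Picking $z^{\ast}\in\overline{G}$ with $(t^{\ast},z^{\ast})\in E$, the strict lateral inequality excludes $z^{\ast}\in\partial G$, hence $z^{\ast}\in G$. By definition of $t^{\ast}$ and continuity, $y_{\epsilon}(t^{\ast},z)\geq x(t^{\ast},z)$ for every $z\in\overline{G}$ with equality at $z^{\ast}$, so $z^{\ast}$ is an interior spatial minimizer of $w:=y_{\epsilon}-x$ at level $t^{\ast}$, at which $w$ equals zero.

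The final step derives the contradiction at $(t^{\ast},z^{\ast})$. Interior spatial minimality yields $\nabla w(t^{\ast},z^{\ast})=0$ and a positive semidefinite spatial Hessian, so uniform parabolicity \eqref{GrindEQ__2_} gives $\sum_{i,j}a_{i,j}(z^{\ast})\partial_{z_{i}z_{j}}^{2}w(t^{\ast},z^{\ast})\geq 0$; and since $w$ is non-negative on $[0,t^{\ast}]\times\{z^{\ast}\}$ with a zero at $t^{\ast}$, the (left) time-derivative satisfies $\partial_{t}w(t^{\ast},z^{\ast})\leq 0$. Crucially, $w=0$ and $\nabla w=0$ at this point imply $y_{\epsilon}=x$ and $\nabla y_{\epsilon}=\nabla x$ there, so the nonlinear increment $f(z^{\ast},y_{\epsilon},\nabla y_{\epsilon})-f(z^{\ast},x,\nabla x)$ vanishes exactly; hence $(Ly_{\epsilon}-Lx)(t^{\ast},z^{\ast})\leq 0$, contradicting the strict positivity established above. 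The main obstacle is precisely this matching: because \eqref{GrindEQ__3_} does not control $f$ in the gradient variable, the contradiction has to be produced at a point where $\nabla y_{\epsilon}=\nabla x$ and $y_\epsilon=x$ simultaneously, which is why the first-crossing point, rather than an arbitrary negative minimum of $w$, is indispensable.
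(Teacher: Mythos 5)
Your argument is correct, and it is a genuinely self-contained route: the paper itself does not reprove the comparison principle from scratch but derives Proposition~\ref{prop:New_Comparison_Principle} from the classical comparison theorem of Friedman \cite[Theorem 16, p.~52]{Fri83}, whereas you give a direct Nagumo--Westphal-type proof. The mechanism you isolate is exactly the right one for the hypotheses as stated: since \eqref{GrindEQ__3_} controls $f$ only in the $w$-slot with $\xi$ held fixed, the contradiction must be manufactured at a point where the spatial gradients of the two functions coincide, and your first-crossing point delivers $w=0$, $\nabla w=0$, $D^2_z w\succeq 0$, $\partial_t w\le 0$ simultaneously, so the nonlinear terms cancel exactly and uniform parabolicity (applied to the symmetric part of $(a_{i,j})$ paired with the positive semidefinite Hessian) finishes the job. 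The additive perturbation $\epsilon e^{\lambda t}$ with $\lambda>k$ plays the role that the exponential weight $e^{-\lambda t}$ plays in the standard extremum-point version of this argument; the two are interchangeable. What your presentation buys is independence from any differentiability or Lipschitz assumption of $f$ in the gradient variable, which a naive mean-value-theorem reduction to the linear maximum principle would require; what the paper's citation-based route buys is brevity.

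Two small points should be tightened, though neither is a conceptual gap. First, the constant $k$ depends on the bounded set $W$, while the range of $y_\epsilon$ depends on $\lambda>k$ and $\epsilon$; fix the quantifiers by taking $W$ to contain the ranges of $x$ and of $y$ enlarged by $[0,1]$, then choosing $\lambda:=k(W)+1$ and restricting to $\epsilon<e^{-\lambda T}$, which suffices for the limit $\epsilon\to 0^{+}$. Second, $L$ is only defined on the open set $D=(0,T)\times G$, so the contradiction cannot be evaluated if $t^{\ast}=T$; but in that case $y_\epsilon>x$ already holds on $[0,T)\times\overline{G}$ and the closure of this inequality gives $y_\epsilon\ge x$ on $\overline{D}$, which is all the limiting step needs (alternatively, run the argument on $[0,T-\delta]$ for every $\delta>0$).
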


Since our intention is to analyze forward complete systems, we introduce some more notation.
Let $CL$ denote the class of functions $x\in C^{0} \left(\mathbb{R} _{+} \times \overline{G}\right)\bigcap C^{1,2} ((0,+\infty )\times G)$.

Now we apply the established results to analyze the initial boundary value problem:
\begin{eqnarray}
(Lx)(t,z)&=&0, \quad \mbox{for all }(t,z)\in (0,+\infty )\times G, \label{GrindEQ__22_}\\
x(0,z)&=&x_{0} (z), \quad \mbox{for all }z\in G, \label{GrindEQ__23_}\\
x(t,z)&=&u(t,z), \quad \mbox{for all }(t,z)\in \mathbb{R} _{+} \times \partial G,  \label{GrindEQ__24_}
\end{eqnarray}
where $x_{0} \in C^{0} (\overline{G})$, $L$ is the uniformly parabolic operator defined by \eqref{GrindEQ__1_} with $f\in C^{0} (\overline{G}\times \mathbb{R} \times \mathbb{R} ^{n} )$.

In this section we assume that the space of input values is $U=C^{0} (\partial G)$, endowed with the standard sup-norm
and that 
$\mathcal{U}:=\left\{\, u\in C^{0} (\mathbb{R} _{+} ;U):\, \, u \mbox{ is bounded}\right\}$, endowed with
\begin{itemize}
  \item the partial order $\le $ for which $u\le v$ iff $u(t,z)\le v(t,z)$ for all $(t,z)\in \mathbb{R} _{+} \times \partial G$,
 \item the norm 
$\left\| u\right\| _{\mathcal{U}} =\mathop{\sup }\limits_{t\ge 0} \left\| u[t]\right\| _{U} =\mathop{\sup }\limits_{z\in \partial G,\ t\ge 0} \left|u(t,z)\right|$. 
\end{itemize}
In the sequel we will need also a subspace of $\Uc$ consisting of constant in time and space inputs:
\begin{eqnarray}
\Uc_{c}:=\{u \in \Uc:\exists k \in\R:\ u(t,z)=k \ \forall (t,z)\in \mathbb{R} _{+} \times \partial G\}.
\label{eq:Uc_const_def}
\end{eqnarray}

Define for $x\in C^0(G)$ the standard $L^p(G)$-norm as $\left\|x \right\| _{p}:=\Big(\int_G |x(z)|^p dz\Big)^{1/p}$.
The Euclidean distance between $w\in G$ and $W \subset \overline{G}$ is denoted by $\rho(w,W):=\inf_{s\in W}|w-s|$.

The following assumptions will be instrumental:
\begin{itemize}
   \item[(H1)] There exists a linear space $X\subseteq C^{0} (\overline{G})$, containing the functions $\{x\in C^{0} (\overline{G}):\exists k\in\R \mbox{ s.t. } x(\cdot)=k\}$, such that for each $x_{0} \in X$ which is constant on $\partial G$, there exists a set of inputs $\Uc(x_{0} )\subseteq \left\{\, v\in \Uc\, :\, v(0,z)=x_{0} (z)\, \mbox{for}\, \, z\in \partial G\, \right\}$, which contains constant in time and space inputs
\[
\left\{\, v\in \Uc\, :\, v(t,z)=x_{0} (z)\, \mbox{for}\, \, z\in \partial G\, \mbox{and}\, \, t\ge 0\, \right\}
\] 
with the following property: for every $x_{0} \in X$, $u\in \Uc(x_{0} )$ there exists a solution $x\in CL$ of the initial boundary 
value problem \eqref{GrindEQ__22_}, \eqref{GrindEQ__23_}, \eqref{GrindEQ__24_} for which $x[t]\in X$ for all $t\ge 0$.  

 \item[(H2)] Assume that for every bounded set $W\subset \mathbb{R} $ there exists $k>0$ such that for every $w_1,w_2\in W$, $(t,z)\in D$, $\xi \in \mathbb{R} ^{n} $ with $w_1>w_2$ inequality \eqref{GrindEQ__3_} holds. Moreover, the function $\overline{G}\times \mathbb{R} \times \mathbb{R} ^{n} \ni (z,w,\xi )\mapsto f(z,w,\xi )\in \mathbb{R} $ is continuously differentiable w.r.t. $w\in \mathbb{R} $ and $\xi \in \mathbb{R} ^{n} $.

 \item[(H3)] For every $\delta >0$, $a\in \mathbb{R} $, $x \in X$ there exists a continuous function $k:\overline{G}\to [0,1]$ with $k(z)=1$ for $z\in \partial G$, $k(z)=0$ for all $z\in G$ with $\rho(z,\partial G)\ge \delta $ and such that $f\in X$ where $f(z)=(1-k(z))x(z)+a\, k(z)$.
\end{itemize}

\begin{remark}\em
Note that (H3) is a condition on the geometry of the boundary of $G$ which is automatically satisfied when $G$ is an open interval in $\R$.
\end{remark}

We equip $X$ in (H1) with the partial order $\le $ for which $x\le y$ iff $x(z)\le y(z)$ for all $z\in \overline{G}$.
For existence theorems, which can be used to verify (H1), we refer to \cite[Chapters 3, 7]{Fri83}.

The next result assures monotonicity of the initial boundary value problem \eqref{GrindEQ__22_}, \eqref{GrindEQ__23_}, \eqref{GrindEQ__24_}.
\begin{theorem}
\label{thm:Monotonicity_Parabolic_Systems}
Suppose that assumptions (H1), (H2) hold and let $p\in[1,+\infty]$. Let us endow the linear space $X$ in (H1) with the standard $L^{p} (G)$-norm, which we denote by $\left\| x\right\| _{p}$.
Then:
\begin{itemize}
   \item[(i)] Initial boundary value problem \eqref{GrindEQ__22_}, \eqref{GrindEQ__23_}, \eqref{GrindEQ__24_} gives rise to
 the monotone control system $\Sigma=\left(X,\mathcal{U},\phi \right)$, where $\phi$ is the solution map of \eqref{GrindEQ__22_}, \eqref{GrindEQ__23_}, \eqref{GrindEQ__24_}.
 \item[(ii)] If additionally (H3) holds, then conditions (i) and (ii) of Proposition~\ref{prop:ISS_Monotone_states_inputs_Systems} hold with $\rho$, $\eta$, $\zeta$ being linear functions and $\Uc_{c}$ given by \eqref{eq:Uc_const_def}.  
\end{itemize}
\end{theorem}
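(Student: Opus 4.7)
The plan is to derive part~(i) directly from Proposition~\ref{prop:New_Comparison_Principle} and to establish part~(ii) by an explicit cutoff construction enabled by (H3). For part~(i), fix $x_0^1\le x_0^2$ in $X$ and inputs $u_1\in\Uc(x_0^1)$, $u_2\in\Uc(x_0^2)$ with $u_1\le u_2$, and let $x^i:=\phi(\cdot,x_0^i,u_i)\in CL$ as guaranteed by (H1). On any slab $(0,T)\times G$, the equalities $Lx^i\equiv 0$ make \eqref{GrindEQ__4_} trivial; \eqref{GrindEQ__5_} is the hypothesis on initial data; and \eqref{GrindEQ__6_} follows from \eqref{GrindEQ__24_} together with the boundary compatibility in (H1), since $x^i(t,z)=u_i(t,z)$ on $\R_+\times\partial G$. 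Hypothesis (H2) supplies the one-sided Lipschitz condition \eqref{GrindEQ__3_}, so Proposition~\ref{prop:New_Comparison_Principle} yields $x^1\le x^2$ pointwise for every $T>0$, i.e.\ the monotonicity of $\Sigma$. Applying the same principle with $x_0^1=x_0^2$, $u_1=u_2$ in both orderings gives uniqueness of solutions and makes $\phi$ well-defined; the identity, causality, and cocycle axioms are then immediate consequences of the autonomous, time-invariant form of \eqref{GrindEQ__22_}--\eqref{GrindEQ__24_}.

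For part~(ii), condition~(i) of Proposition~\ref{prop:ISS_Monotone_states_inputs_Systems} is essentially free: $x_-\le x\le x_+$ in the pointwise order implies $|x(z)|\le|x_-(z)|+|x_+(z)|$ for every $z\in\overline{G}$, so integrating gives $\|x\|_p\le\|x_-\|_p+\|x_+\|_p$ and $\rho=\Id$ works. For condition~(ii), given $x\in X$, $u\in\Uc(x)$ and $\varepsilon>0$, set $a:=\|u\|_{\Uc}+\varepsilon$. The compatibility in (H1) forces $x(z)=u(0,z)$ on $\partial G$, hence $\max_{z\in\partial G}x(z)\le\|u\|_{\Uc}<a$; by uniform continuity of $x$ on the compact set $\overline{G}$ choose $\delta>0$ so small that $x(z)<a$ whenever $\rho(z,\partial G)<\delta$. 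Apply (H3) with this $\delta$ and this $a$ to obtain a cutoff $k:\overline{G}\to[0,1]$ such that
\[
x_+(z):=(1-k(z))\,x(z)+a\,k(z)\in X.
\]
Since $k\equiv 1$ on $\partial G$ we have $x_+|_{\partial G}\equiv a$, so (H1) puts the constant-in-time-and-space input $u_+\equiv a$ into $\Uc_c\cap\Uc(x_+)$; the pointwise identity $x_+-x=k\,(a-x)$ together with the choice of $\delta$ (which ensures $a>x(z)$ on the set $\{k>0\}$) gives $x_+\ge x$; and $u_+\ge u$ is automatic from $a\ge\|u\|_{\Uc}$. A symmetric construction with $a':=-(\|u\|_{\Uc}+\varepsilon)$ and possibly a smaller $\delta$ yields $x_-\in X$ and $u_-\equiv a'\in\Uc_c\cap\Uc(x_-)$ with $x_-\le x$ and $u_-\le u$. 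Linearity of $\eta,\xi$ follows by inspection: $\|u_\pm\|_{\Uc}=\|u\|_{\Uc}+\varepsilon$, so $\eta=\Id$, while $|x_\pm(z)|\le|x(z)|+|a|$ integrates to
\[
\|x_\pm\|_p\le\|x\|_p+\max\!\bigl(1,\mu(G)^{1/p}\bigr)\bigl(\|u\|_{\Uc}+\varepsilon\bigr),
\]
giving a linear $\xi$.

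The principal technical difficulty lies in the construction of $x_\pm$: one must simultaneously force the boundary trace to be constant (so that (H1) admits a matching constant input), sandwich $x$ in the pointwise order, keep $x_\pm$ inside $X$, and preserve a linear $L^p$-bound. The $\varepsilon$-slack in $a$, combined with the cutoff supplied by (H3) and the uniform continuity of $x$ near $\partial G$ (using the compatibility $x|_{\partial G}=u(0,\cdot)$), is exactly what reconciles these four requirements.
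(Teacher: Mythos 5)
Your proposal is correct and follows essentially the same route as the paper: part~(i) is obtained by applying the comparison principle of Proposition~\ref{prop:New_Comparison_Principle} (with uniqueness, hence well-posedness of $\phi$, as a by-product), and part~(ii) by the cutoff construction that hypothesis (H3) is tailored for, with the $\varepsilon$-slack in $a=\|u\|_{\Uc}+\varepsilon$ and the choice of $\delta$ near $\partial G$ used exactly as intended to keep $x_\pm\in X$, constant on $\partial G$, ordered with respect to $x$, and linearly bounded in $L^p$. The only point treated lightly is the verification of the control-system axioms (in particular that the shifted input lies in $\Uc(x[t])$ for the cocycle property), which is a matter of how (H1) is read rather than a mathematical gap.
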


We continue to assume that the axioms (H1) and (H2) hold and that $X$ is as in (H1).

Consider now the following equations: 
\begin{align}
\frac{\partial y}{\partial t}(t,z) - \sum_{i,j=1}^n  & a_{ij}(z)\frac{\partial^2 y}{\partial z_i \partial z_j}(t,z) \nonumber\\
&-  f\Big(z,y(t,z)+v(t,z), \nabla y(t,z)\Big) =0,
\label{eq:Nonlinear_Parabolic_Equation_transformed}
\end{align}
where $t >0$, $z \in G$, together with homogeneous Dirichlet boundary conditions
\begin{eqnarray}
y(t,z) = 0, \quad z \in \partial G,\ t \geq 0.
\label{eq:Dirichlet_transformed}
\end{eqnarray}
The state space of \eqref{eq:Nonlinear_Parabolic_Equation_transformed} is a linear space
\[
Y:=\{y\in X: y(z)=0,\ z\in\partial{G}\}
\]
and the input $v$ belongs to the space of constant in time and space inputs
\[
\Vc:=\{v\in C^0(\R_+\times \overline{G},\R): \exists k\in\R: v(t,z)=k,\ (t,z)\in\R_+\times \overline{G}\}.
\]
Denote the solutions of \eqref{eq:Nonlinear_Parabolic_Equation_transformed}, \eqref{eq:Dirichlet_transformed}, corresponding to the initial condition $y \in Y$ and input $v$ by $\phi_y(t,y,v)$.
It is easy to see that for any $x\in X$ and any $v \in \Vc$ for which $v|_{\partial G} \in \Uc(x)$  it holds that
\begin{eqnarray}
\phi_y(t,y,v)=\phi(t,x,v|_{\partial G}) -v,\quad \mbox{where}\ y=x-v,
\label{eq:phi_phyy_relation}
\end{eqnarray}
and $y \in Y$ since $v\in X$, $X$ is a linear space and $x(z)=v(z)$ for $z\in\partial G$.

Now we are able to state our main result, showing that \textit{ISS of nonlinear parabolic systems w.r.t. a boundary input can be reduced to the problem of ISS of a parabolic system with a distributed and constant input, which is conceptually much simpler.}
\begin{theorem}
\label{thm:ISS_Parabolic_Systems_Characterization}
Suppose that assumptions (H1), (H2) and (H3) hold and let $p\in[1,+\infty]$. Let us endow the linear spaces $X,Y$ with the norm $\left\| \cdot \right\| _{p}$. The following statements are equivalent:

\begin{itemize}
 \item[(i)] The system \eqref{GrindEQ__22_}, \eqref{GrindEQ__23_}, \eqref{GrindEQ__24_}  with the state space $X$ is ISS w.r.t. inputs of class $\Uc$.
 \item[(ii)] The system \eqref{GrindEQ__22_}, \eqref{GrindEQ__23_}, \eqref{GrindEQ__24_} with the state space $X$ is ISS w.r.t. constant in time and space inputs of class $\Uc$.
 \item[(iii)] The system \eqref{eq:Nonlinear_Parabolic_Equation_transformed}, \eqref{eq:Dirichlet_transformed} 
 with the state space $Y$ is ISS w.r.t. inputs in $\Vc$.
\end{itemize}
\end{theorem}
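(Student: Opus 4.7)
The strategy is to prove (i)$\Leftrightarrow$(ii) by the monotone-system machinery of Section~\ref{sec:ISS_monotone_control_systems}, and (ii)$\Leftrightarrow$(iii) by the elementary change of variables encoded in \eqref{eq:phi_phyy_relation}. The direction (i)$\Rightarrow$(ii) is immediate since $\Uc_c\subseteq\Uc$. For (ii)$\Rightarrow$(i), I invoke Proposition~\ref{prop:ISS_Monotone_states_inputs_Systems}: part (i) of Theorem~\ref{thm:Monotonicity_Parabolic_Systems} says $\Sigma=(X,\Uc,\phi)$ is monotone in states and in inputs, while part (ii) of the same theorem packages precisely the sandwich hypotheses (i)--(ii) of that proposition (with linear $\rho,\eta,\xi$) for the class $\Uc_c$ of \eqref{eq:Uc_const_def}; the proposition then lifts ISS from $\Uc_c$ to all of $\Uc$.

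For (ii)$\Leftrightarrow$(iii) I use \eqref{eq:phi_phyy_relation}. Every $v\in\Vc$ is of the form $v(t,z)\equiv k$ for some $k\in\R$, so $\|v\|_{\Vc}=|k|$ and the $L^p(G)$-norm of the constant function $k\mathbf{1}_G$ is $c|k|$ with $c:=\mu(G)^{1/p}$ (interpreting $c=1$ when $p=\infty$). To prove (iii)$\Rightarrow$(ii), take $x\in X$ and $u\in\Uc(x)\cap\Uc_c$ with $u\equiv k$ on $\R_+\times\partial G$; the compatibility built into (H1) forces $x\equiv k$ on $\partial G$, so $y:=x-k\mathbf{1}_G\in Y$ and $v:=k\in\Vc$ satisfies $v|_{\partial G}=u$. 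Combining \eqref{eq:phi_phyy_relation} with the ISS estimate of (iii) and the bound $\|y\|_p\leq\|x\|_p+c|k|$ yields
\begin{align*}
\|\phi(t,x,u)\|_p
&\leq \|\phi_y(t,y,v)\|_p+c|k|\\
&\leq \beta(\|x\|_p+c|k|,t)+\gamma(|k|)+c|k|,
\end{align*}
which becomes an ISS estimate over $\Uc_c$ after applying the weak triangle inequality $\beta(a+b,t)\leq\beta(2a,t)+\beta(2b,t)$ and absorbing $c|k|$ into an appropriate $\Kinf$ gain. The reverse direction (ii)$\Rightarrow$(iii) is symmetric: for $y\in Y$ and $v\equiv k\in\Vc$, set $x:=y+k\mathbf{1}_G\in X$ and $u:=v|_{\partial G}\in\Uc(x)\cap\Uc_c$ (admissibility again by (H1)), and run the estimate in reverse to bound $\|\phi_y(t,y,v)\|_p$.

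The main obstacle is bookkeeping rather than analysis: one must verify that the translation $x\mapsto x-k\mathbf{1}_G$ preserves the compatibility constraint $u\in\Uc(x)$, which is ensured by axiom (H1) because constant boundary inputs are always admissible for initial data constant on $\partial G$, and that the entire argument goes through uniformly for all $p\in[1,+\infty]$. The genuinely hard analytic content -- monotonicity of the PDE and the linear sandwich bounds -- has already been packaged into Theorem~\ref{thm:Monotonicity_Parabolic_Systems} and Proposition~\ref{prop:ISS_Monotone_states_inputs_Systems}, so at this stage no new PDE estimates are required.
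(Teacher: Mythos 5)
Your proposal is correct and follows essentially the same route as the paper: (i)$\Leftrightarrow$(ii) via Theorem~\ref{thm:Monotonicity_Parabolic_Systems} feeding into Proposition~\ref{prop:ISS_Monotone_states_inputs_Systems}, and (ii)$\Leftrightarrow$(iii) via the translation \eqref{eq:phi_phyy_relation} with the norm bookkeeping $\|k\mathbf{1}_G\|_p=\mu(G)^{1/p}|k|$ and the weak triangle inequality for $\KL$ functions. The admissibility point you flag (constant boundary data being admissible for initial conditions constant on $\partial G$, guaranteed by (H1)) is exactly the detail that makes the back-and-forth translation legitimate.
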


Similarly, the following result on exp-ISS property holds:
\begin{theorem}
\label{thm:exp-ISS_Parabolic_Systems_Characterization}
Suppose that assumptions (H1), (H2) and (H3) hold and let $p\in[1,+\infty]$. Let us endow the linear spaces $X,Y$ with the norm $\left\| \cdot \right\| _{p}$.
The following statements are equivalent:

\begin{itemize}
 \item[(i)] The system \eqref{GrindEQ__22_}, \eqref{GrindEQ__23_}, \eqref{GrindEQ__24_}  with the state space $X$ is exp-ISS w.r.t. inputs of class $\Uc$.
 \item[(ii)] The system \eqref{GrindEQ__22_}, \eqref{GrindEQ__23_}, \eqref{GrindEQ__24_} with the state space $X$ is exp-ISS w.r.t. constant in time inputs of class $\Uc$.
 \item[(iii)] The system \eqref{eq:Nonlinear_Parabolic_Equation_transformed}, \eqref{eq:Dirichlet_transformed} 
 with the state space $Y$ is exp-ISS w.r.t. inputs in $\Vc$.
\end{itemize}
\end{theorem}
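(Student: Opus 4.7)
The plan is to mirror the proof of Theorem~\ref{thm:ISS_Parabolic_Systems_Characterization} but track the specific form of the exponential estimate through each reduction. I would establish the cycle $(i)\Rightarrow(ii)\Rightarrow(iii)\Rightarrow(ii)\Rightarrow(i)$, noting that $(i)\Rightarrow(ii)$ is immediate because $\Uc_c\subseteq\Uc$. The two nontrivial equivalences, $(i)\Leftrightarrow(ii)$ and $(ii)\Leftrightarrow(iii)$, are independent and can be tackled separately.

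For $(i)\Leftrightarrow(ii)$ I would invoke Theorem~\ref{thm:Monotonicity_Parabolic_Systems}: it asserts that $\Sigma$ is monotone both in states and inputs, and that conditions (i)--(ii) of Proposition~\ref{prop:ISS_Monotone_states_inputs_Systems} hold with $\rho$, $\eta$ and $\xi$ all \emph{linear}. The ``moreover'' clause of Proposition~\ref{prop:ISS_Monotone_states_inputs_Systems} then yields, for linear $\rho$ and $\xi$, the equivalence between exp-ISS w.r.t.\ $\Uc$ and exp-ISS w.r.t.\ $\Uc_c$, which is precisely $(i)\Leftrightarrow(ii)$. For $(ii)\Leftrightarrow(iii)$ I would use the substitution $y=x-v$ with $v\in\Vc$ constant in time and space of value $k\in\R$, together with the identity \eqref{eq:phi_phyy_relation}. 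Then $u:=v|_{\partial G}\in\Uc_c$ satisfies $\|u\|_\Uc=|k|$, while $\|v\|_p=|k|\mu(G)^{1/p}$, so the three scales $\|u\|_\Uc$, $\|v\|_p$ and $\|v\|_\Vc$ are pairwise proportional. Assuming (ii), the triangle inequality applied to $\phi_y(t,y,v)=\phi(t,y+v,u)-v$ and to $x=y+v$ gives
\[
\|\phi_y(t,y,v)\|_p \le M e^{-at}\|y\|_p + \bigl((M+1)\mu(G)^{1/p}\|u\|_\Uc + \gamma(\|u\|_\Uc)\bigr),
\]
which is an exp-ISS estimate for the $y$-system with a $\Kinf$-gain in $\|v\|_\Vc$ (linear if $\gamma$ is). The reverse direction $(iii)\Rightarrow(ii)$ is symmetric: write $\phi=\phi_y+v$ and re-express the bound in terms of $\|x\|_p$ and $\|u\|_\Uc$. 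Crucially, the exponential rate $a$ and the constant $M$ multiplying the state norm are preserved in both directions.

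The main obstacle I expect is the bookkeeping among three distinct norms: the boundary sup-norm on $\Uc$, the state $L^p$-norm on $X$ and $Y$, and the norm on $\Vc$, while simultaneously ensuring that the exponential decay rate is never corrupted by an input term leaking onto the state side. Because $G$ is bounded (with sufficiently smooth boundary thanks to (H3)), constant functions have $L^p$-norm exactly $\mu(G)^{1/p}|k|$, so all the relevant constants are explicit and finite, and the translation between systems produces only an additive $\Kinf$ input gain. Combined with the $(i)\Leftrightarrow(ii)$ step provided by the ``moreover'' clause of Proposition~\ref{prop:ISS_Monotone_states_inputs_Systems}, this delivers the full chain of equivalences and hence the exp-ISS analogue of Theorem~\ref{thm:ISS_Parabolic_Systems_Characterization}.
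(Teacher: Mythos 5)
Your proposal is correct and follows essentially the same route the paper sets up: Theorem~\ref{thm:Monotonicity_Parabolic_Systems}(ii) together with the ``moreover'' clause of Proposition~\ref{prop:ISS_Monotone_states_inputs_Systems} (with linear $\rho$, $\eta$, $\xi$) yields (i)$\Leftrightarrow$(ii), and the translation \eqref{eq:phi_phyy_relation} with the explicit $L^p$-norm bookkeeping for constant inputs yields (ii)$\Leftrightarrow$(iii). The paper defers the written proof to \cite{MKK17}, but the scaffolding it provides is exactly the argument you give.
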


%

\section{ISS of linear parabolic systems with boundary inputs}
\label{sec:Lin_Parabolic_Systems}

In this section, we apply the above results to the linear problems with boundary disturbances over multidimensional spatial domains, which are of specific interest.
We continue to assume that $G\subset \R^n$ is an open connected and bounded set with the smooth boundary
and $D:=(0,+\infty)\times G$.

Consider the linear heat equation with a Dirichlet boundary input:
\begin{equation}
\label{eq:IBVP_Operator_L_Linear}
\begin{array}{l}
\frac{\partial x}{\partial t}(t,z) = \Delta x(t,z) + ax(t,z),\quad (t,z) \in D,  \\
x(t,z) = u(t,z), \quad (t,z)\in (0,+\infty) \times \partial G,\\
x(0,z) = x_0(z), \quad z \in \overline{G},
\end{array}
\end{equation}
where $\Delta$ is a Laplacian and $a \in \R$. 

\textit{We are going to show that under reasonable assumptions ISS of the system \eqref{eq:IBVP_Operator_L_Linear}
is equivalent to its uniform global asymptotic stability in absence of inputs.}

The input $u:\R_+ \times\partial G\to \R$ is the trace of a function $\nu\in C^{0}(\R_+\times\overline{G}) \bigcap C^{1,2}((0,+\infty)\times G)$. 
Defining the function
\vspace{-1.5mm}
\begin{eqnarray}
f(t,z):=\Delta \nu(t,z) - \frac{\partial \nu}{\partial t}(t,z),\quad t\geq0,\ z\in G
\label{eq:f_def}
\end{eqnarray}
\vspace{-0.5mm}
and using the transformation
\vspace{-1.5mm}
\begin{eqnarray}
x(t,z)=e^{at}\big(y(t,z)+\nu(t,z)\big),\quad t\geq 0 ,\ z\in \overline{G}
\label{eq:Aux_Transformation}
\end{eqnarray}
\vspace{-0.5mm}
we are in a position to study an equivalent to \eqref{eq:IBVP_Operator_L_Linear} initial boundary value problem
\vspace{-1.5mm}
\begin{equation}
\label{eq:IBVP_Operator_L_Linear_modified}
\begin{array}{l}
\frac{\partial y}{\partial t}(t,z) = \Delta y(t,z) + f(t,z),\quad (t,z) \in D, \\
y(t,z) = 0, \quad (t,z)\in (0,+\infty) \times \partial G,\\
y(0,z) = x(0,z)-\nu(0,z), \quad z \in \overline{G}.
\end{array}
\end{equation}
Let $m>0$ be the smallest integer for which $m\geq \frac{1+[n/2]}{2}$. \cite[Theorem 6, p. 365]{Eva98} in conjunction with \cite[Theorem 4, p. 288]{Eva98}
and \cite[Theorem 6, p. 270]{Eva98} guarantees that if 
\begin{itemize}
 \item[(p-i)] $y[0]\in H^{2m+1}(G)$, $\frac{d^k}{dt^k}(f[t]) \in L^2(0,T;H^{2m-2k}(G))$ for every $k=0,\ldots,m$ and $T>0$,
 \item[(p-ii)] $g_i \in H^1_0(G)$ for $i=0,\ldots,m$, where $g_0:=y[0]$, $g_m:=\frac{d^{m-1}}{dt^{m-1}}f[0]+\Delta g_{m-1}$,
\end{itemize}
then the initial boundary value problem \eqref{eq:IBVP_Operator_L_Linear_modified} has a unique solution $y\in CL$.

Therefore, using the transformation \eqref{eq:Aux_Transformation}, we conclude that for every $x_0\in H^{2m+1}(G)$ and for every input $u\in \R_+\times \partial G \to \R$ being the trace of a function $\nu\in C^{0}(\R_+\times\overline{G}) \bigcap C^{1,2}((0,+\infty)\times G)$ and satisfying (p-i), (p-ii), 
also the initial boundary value problem \eqref{eq:IBVP_Operator_L_Linear} has a unique solution $x\in CL$.
Hence, \eqref{eq:IBVP_Operator_L_Linear} defines a control system with
\begin{itemize}
 \item $X:=H^{2m+1}(G)$ with $\|\cdot\|_p$-norm, for any fixed $p\in[1,+\infty]$.
 \item  $\Uc(x)$ being the set of all inputs $u:\R_{+} \times \partial G\to \R$ which are traces of functions $\nu\in C^{0}(\R_+\times\overline{G}) \bigcap C^{1,2}((0,+\infty)\times G)$ satisfying $\frac{d^{k} }{d\, t^{k} } \left(f[t]\right)\in L^{2} \big(0,T;H^{2m-2k} (G)\big)$ for every $k=0,...,m$ and $T>0$ and $g_{i} \in H_{0}^{1} (G)$ for $i=0,...,m$, where $f$ is defined by 
 \eqref{eq:f_def}, $g_{0} :=x-v[0]$, $g_{1} :=f[0]+\Delta g_{0} $,\dots , $g_{m} :=\frac{d^{m-1} }{d\, t^{m-1} } \left(f[0]\right)+\Delta g_{m-1} $,

 \item $\phi(t,x_0,u)$ being the unique solution $x[t]$ of the initial boundary value problem \eqref{eq:IBVP_Operator_L_Linear}.
\end{itemize}
Notice that $X$ contains the constant functions. We conclude that $\Sigma$ satisfies (H1). Clearly, (H2) holds for $\Sigma$ as well.

Hence we obtain from Theorem~\ref{thm:exp-ISS_Parabolic_Systems_Characterization}:
\begin{corollary}
\label{cor:ISS_Linear_Heat_Equation}
Assume that $G \subset \R^n$ is an open bounded set with a smooth boundary for which Assumption (H3) holds. 
Then $\Sigma=(X,\Uc,\phi)$ is exp-ISS with a linear gain function iff $\Sigma$ is 0-UGAS.
\end{corollary}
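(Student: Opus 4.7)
The plan is to derive Corollary~\ref{cor:ISS_Linear_Heat_Equation} as a direct specialization of Theorem~\ref{thm:exp-ISS_Parabolic_Systems_Characterization} combined with a steady-state analysis available only for linear systems. The ``only if'' implication is immediate: if $\Sigma$ is exp-ISS (with any gain), then evaluating \eqref{eq:exp-ISS_estimate} at $u\equiv 0$ yields $\|\phi(t,x,0)\|_p\le Me^{-at}\|x\|_X$, which is a special case of \eqref{UniStabAbschaetzung}. Hence the only work is in the converse direction.

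For the ``if'' direction, assume $\Sigma$ is 0-UGAS. I would first invoke Theorem~\ref{thm:exp-ISS_Parabolic_Systems_Characterization} to reduce exp-ISS of \eqref{eq:IBVP_Operator_L_Linear} to exp-ISS of the transformed distributed system \eqref{eq:Nonlinear_Parabolic_Equation_transformed}-\eqref{eq:Dirichlet_transformed} with respect to inputs in $\Vc$ (constant in time and space). In the linear heat equation case, the transformed dynamics for $v\equiv k\in\R$ read
\begin{equation*}
\partial_t y = \Delta y + ay + ak,\qquad y|_{\partial G}=0,\qquad y[0]=x_0-k.
\end{equation*}
I would split $y=w+\bar y_k$, where $\bar y_k\in Y$ solves the elliptic problem $\Delta \bar y_k + a\bar y_k = -ak$ on $G$ with $\bar y_k|_{\partial G}=0$, and $w$ satisfies the autonomous homogeneous heat equation with initial datum $w[0]=y[0]-\bar y_k$.

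The key observation is that 0-UGAS of $\Sigma$ means the autonomous system $\partial_t w=\Delta w + aw$, $w|_{\partial G}=0$ is asymptotically stable in $L^p(G)$; since this system is linear and governed by a $C_0$-semigroup, 0-UGAS is equivalent to uniform exponential stability, so $\|w[t]\|_p\le Me^{-bt}\|w[0]\|_p$ for some $M,b>0$. In particular, $0$ lies in the resolvent set of $\Delta + a$ on $H^{2m+1}(G)\cap H^1_0(G)$, which guarantees existence and uniqueness of $\bar y_k$; by the elliptic regularity bounds inherited from the resolvent estimate, $\|\bar y_k\|_p \le C|k|=C\|v\|_\Vc$ linearly in the input magnitude. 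Putting the pieces together yields
\begin{equation*}
\|y[t]\|_p \le Me^{-bt}\|y[0]\|_p + (MC+C)\|v\|_\Vc,
\end{equation*}
which is exp-ISS of the distributed system with a linear gain. Pulling this back through Theorem~\ref{thm:exp-ISS_Parabolic_Systems_Characterization} (using that the linearity of $\eta$ in Theorem~\ref{thm:Monotonicity_Parabolic_Systems}(ii) together with the final clause of Proposition~\ref{prop:ISS_Monotone_states_inputs_Systems} preserves the linear gain) delivers exp-ISS of $\Sigma$ with a linear gain function.

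The main obstacle I expect is the regularity bookkeeping: the space $X=H^{2m+1}(G)$ is chosen so that classical solutions exist, and one must check that $\bar y_k$ sits in $X$ (which follows from elliptic regularity since the forcing $-ak$ is constant, hence smooth) and that the splitting $y=w+\bar y_k$ produces an admissible pair for the autonomous problem. Once that compatibility is verified, the argument is essentially soft, since the hard PDE analysis has already been absorbed into Theorems~\ref{thm:Monotonicity_Parabolic_Systems}--\ref{thm:exp-ISS_Parabolic_Systems_Characterization} and into the semigroup-theoretic equivalence between 0-UGAS and exponential stability for linear systems.
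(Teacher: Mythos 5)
Your proposal is correct and shares the paper's overall architecture: the trivial direction by setting $u\equiv 0$, then reduction of the converse via Theorem~\ref{thm:exp-ISS_Parabolic_Systems_Characterization} to exp-ISS of the distributed system $\partial_t y=\Delta y+ay+ak$ with homogeneous Dirichlet conditions and constant inputs, followed by linear semigroup theory. Where you diverge is in the one remaining analytic step. The paper establishes exp-ISS of the transformed system by invoking the variation-of-constants estimate for linear systems with bounded input operators (\cite[Proposition 3]{DaM13}), i.e.\ $\|y[t]\|_p\le Me^{-bt}\|y[0]\|_p+\tfrac{M}{b}\|av\|_p$, which works for arbitrary bounded distributed inputs and requires nothing beyond exponential stability of $T_p$. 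You instead shift by the elliptic steady state $\bar y_k=-(\Delta+a)^{-1}(ak)$ and reduce to the autonomous homogeneous problem; this is a legitimate alternative that exploits the constancy of the input (all that the reduction leaves you with), and the bound $\|\bar y_k\|_p\le C|k|$ follows from boundedness of the resolvent at $0$, which in turn follows from exponential stability --- in fact $(\Delta+a)^{-1}=-\int_0^\infty T_p(t)\,dt$, so your route is essentially the Duhamel estimate evaluated at a constant input. Two points you gloss over that the paper treats explicitly: first, passing from 0-UGAS on $X=H^{2m+1}(G)$ (with the $\|\cdot\|_p$-norm) to uniform exponential stability of the semigroup on the whole of $L^p(G)$ requires a density argument together with boundedness of the operators $T_p(t)$; second, for $p=\infty$ the operator $\Delta+aI$ does \emph{not} generate a $C_0$-semigroup on $L^\infty(G)$ (see Remark~\ref{rem:Linfinity}), so your blanket appeal to ``0-UGAS is equivalent to uniform exponential stability for $C_0$-semigroups'' must be carried out on $C_0(\overline{G})$, and the cases $p\in(1,\infty)$, $p=1$, $p=\infty$ need the three separate generation results the paper cites from Pazy. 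Neither point breaks your argument, but both need to be said for the proof to be complete across all $p\in[1,+\infty]$.
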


\begin{proof}
Clearly, if $\Sigma=(X,\Uc,\phi)$ is exp-ISS, then $\Sigma=(X,\Uc,\phi)$ is 0-UGAS.

Now assume that $\Sigma=(X,\Uc,\phi)$ is 0-UGAS and let us prove the converse implication. 
\eqref{eq:IBVP_Operator_L_Linear} is a problem \eqref{GrindEQ__1_}, corresponding to the operator
$Lx:= \frac{\partial x}{\partial t} - \Delta x - ax$.

According to Theorem~\ref{thm:exp-ISS_Parabolic_Systems_Characterization}, exp-ISS of \eqref{eq:IBVP_Operator_L_Linear} is equivalent to exp-ISS of the system
\begin{eqnarray}
\frac{\partial y}{\partial t} = \Delta y +ay+av,\quad t >0, \ z \in G,
\label{eq:Nonlinear_Parabolic_Equation_transformed-2}
\end{eqnarray}
with homogeneous Dirichlet boundary condition \eqref{eq:Dirichlet_transformed} and constant inputs $v \in \R$.

In order to prove the claim, we are going to use the semigroup approach. Consider three cases: $p\in (1,+\infty)$, $p=1$ and $p=+\infty$.
The operator $A_p$, $p\in (1,+\infty)$ with a domain of definition $D(A_p):=W^{2,p}(G) \bigcap W^{1,p}_0(G)$ and $A_p x:=\Delta x + ax$, for $x\in D(A_p)$ generates an analytic semigroup over $L^p(G)$, see \cite[Theorem 3.6, p. 215]{Paz83}.

For $p=1$ the operator $A_1$ with a domain of definition $D(A_1):= \{x\in W^{1,1}_0(G):\Delta x \in L^1(G)\}$ and $A_1 x:=\Delta x + ax$, for $x\in D(A_1)$ generates an analytic semigroup over $L^1(G)$, see \cite[Theorem 3.10, p. 218]{Paz83}.
Analogously, one can define an operator $A_\infty:=\Delta + aI$ with a certain domain of definition $D(A_\infty)$, which generates an analytic semigroup over $C_0(\overline{G}):=\{x\in C(\overline{G}): x(z)=0 \ \forall z\in\partial G\}$, see \cite[Theorem 3.7, p. 217]{Paz83}.

Now, since we assume that $\Sigma$  is 0-UGAS in the norm $\|\cdot \|_p$, then also the operator $A_p$ generates 0-UGAS (exponentially stable) $C_0$-semigroup $T_p$, which follows since $X$ is dense in the $L^p(G)$ for $p\in[1,\infty)$ and in $C_0(\overline{G})$ for $p=\infty$ and since $T_p$ is a semigroup of bounded operators (hence the norm of the operator $T_p(t)$ with a domain restricted to $Y$ is equal to the norm of $T_p(t)$ as an operator on $X$).


Since $T_p$ is an exponentially stable semigroup, \cite[Proposition 3]{DaM13} ensures that \eqref{eq:Nonlinear_Parabolic_Equation_transformed-2} is exp-ISS with a linear gain function. 
An inspection of Theorem~\ref{thm:exp-ISS_Parabolic_Systems_Characterization} shows that \eqref{eq:IBVP_Operator_L_Linear} is exp-ISS with a linear gain function as well.
%
%
%
\end{proof}

\begin{remark}\em
\label{rem:Linfinity}
Note that the operator $\Delta + aI$ in the above proof does not generate a strongly continuous semigroup over the space $L^\infty(G)$, see \cite[p.217]{Paz83} or \cite[Lemma 2.6.5, Remark 2.6.6]{CaH98}, therefore it is of importance to define $A_\infty$ as a generator of strongly continuous semigroup over $C_0(\overline{G})$.
\end{remark}

\begin{remark}\em
\label{rem:Practical_applications}
There are different ways to ensure that $\Sigma$ is 0-UGAS. For $p\in(1,+\infty)$ a usual way would be to construct a Lyapunov functional. 
On the other hand, since $T_p$ is an analytic semigroup for any $p\in[1,+\infty]$,
$T_p$ is exponentially stable (i.e. 0-UGAS) if and only if the spectrum of $A_p$ lies in $\{z\in \C:\text{Re}z<0\}$, see e.g. \cite[p.387]{Tri75}.
\end{remark}

\section{Conclusions}

We presented a new technique for analyzing ISS of linear and nonlinear parabolic equations with boundary inputs.
We prove that parabolic equations with Dirichlet boundary inputs are monotone control systems and use this fact to transform the parabolic system with boundary disturbances into a related system with distributed constant disturbances. We show that ISS of the original equation is equivalent to ISS of the transformed system.
Analysis of ISS of the transformed system is much easier to perform, for example, by means of Lyapunov methods.

Although in this paper we concentrate on parabolic scalar equations and study properties of classical solutions of such equations, the scheme which we have developed here can be useful for other classes of monotone control systems: monotone parabolic systems, ordinary differential equations, ODE-heat cascades, some classes of time-delay systems. We expect that a big part of our analysis can be transferred to study properties of mild solutions of parabolic systems. Finally, some of our results can be used to study ISS of monotone systems of a general nature.

\vspace{-2mm}

%

\end{document}